\newtheorem{theorem}{Theorem}[section] 
\newtheorem{lemma}[theorem]{Lemma}
\theoremstyle{definition}
\newtheorem{example}{Example}
\newtheorem{definition}{Definition}
\theoremstyle{remark}
\newtheorem{remark}{Remark}
\newcommand{\ep}{\epsilon}
\newcommand{\mbb}{\mathbb}
\newcommand{\de}{\delta}
\newcommand{\ov}{\overline}
\newcommand{\pa}{\partial}
\newcommand{\mf}{\mathbb}
\newcommand{\om}{\omega}
\newcommand{\Om}{\Omega}
\newcommand{\z}{\zeta}
\newcommand{\ti}{\tilde}
\renewcommand{\Re}{\operatorname{Re}}
\begin{document}

\title[Geodesics]{On the Geodesics of the Szeg\"o metric}

\author[Anjali Bhatnagar]{Anjali Bhatnagar}
\address{
Department of Mathematics, Indian Institute of Science Education and Research, Pune 411008, India}
\email{anjali.bhatnagar@students.iiserpune.ac.in}

\begin{abstract}
We explore the existence of closed geodesics and geodesic spirals for the Szeg\"o metric in a $C^\infty$-smoothly bounded strongly pseudoconvex domain $\Om\subset\mf{C}^n$, which is not simply connected for $n \geq 2$.
\end{abstract}

\date{}

\maketitle

  
\section{Introduction}
 The purpose of the article is to identify the shared properties by the Szeg\"o and Bergman metrics, which continues our prior work \cite{bb24}. The Bergman metric is an active area of research while the Szeg\"o metric with respect to the Fefferman surface area measure was recently introduced by Barrett-Lee \cite{bl14} to study an invariant version of the Szeg\"o metric. This metric is called the Fefferman-Szeg\"o metric---which has been further investigated in \cite{k19,k21}. In contrast, the Szeg\"o metric relative to the Euclidean surface area measure is generally not invariant under biholomorphisms, except in one dimension. We have explored the intrinsic properties of the Szeg\"o metric such as geodesics, curvature, and $L^2$-cohomology in non-degenerate finitely connected planar domains in \cite{bb24} and draw the similarity between the theories of the Bergman and Szeg\"o metrics. We have also provided the comparison between the Carath\'eodory and Szeg\"o metrics, and established the existence of domains where the curvatures of the Szeg\"o metric achieves both positive and negative real-values. It can also be observed from \cite{bb24,z10} that there are domains in which the curvatures of the Bergman and Szeg\"o metric have opposite signs. 
 
 To deepen this comparison, we continue to study geodesics that remain in a compact subset of domains. Such geodesics are either closed or non-closed---the latter is called \emph{geodesic spiral}. This was explored by Herbort for the Bergman metric in \cite{hg83}. To set the stage, we briefly recall the setup. Let $\Om\subset\mf{C}^n$ be a bounded domain with $C^{\infty}$-smooth boundary $\pa \Om$. The Hardy space $H^{2}(\pa\Om)$ is defined as the closure in $L^{2}(\pa \Om)$ of the set of functions whose Poisson integrals are holomorphic in $\Om$. The Szeg\"o kernel $S_{\Om}(z, w)$ associated with $H^2(\pa\Om)$ is uniquely determined by the following properties: for each $z\in \Om, S_{\Om}(\cdot, z)\in H^{2}(\pa\Om)$, for all $z, w\in \Om, S_{\Om}(z, w)=\ov{S_{\Om}(w, z)}$, and for each $h\in H^{2}(\pa\Om),$
\[h(z)=\int_{\pa\Om}h(w)S_{\Om}(z, w)d\sigma_{E}\;\;\;\text{ for all }z\in \Om,\]
where $d\sigma_{E}$ denotes the Euclidean surface area measure.

\noindent Furthermore, $S_{\Om}(z, w)$ can be expressed in terms of any complete orthonormal basis $\{\phi_{i}\}_{i\geq 1}$ of $H^{2}(\pa \Om)$ as follows
\[S_{\Om}(z,w)=\sum_{i=1}^{\infty}\phi_{i}(z)\ov{\phi_{i}(w)},\]
where the series converges uniformly on compact subsets of $\Om\times \Om$. Consequently, the function $g_\Om(z)=\log S_\Om(z, z)$ is a $C^\infty$-smooth strongly plurisubharmonic function and therefore induces the K\"ahler metric on $\Om$ called the Szeg\"o metric defined as
 \begin{equation*}
      ds_{s_{\Om}}^2=\sum_{j, k=1}^{n}\frac{\partial^{2}g_{\Om}(z)}{\partial z_{j}\partial\overline{z}_{k}}dz_{j}d\ov{z}_{k}.
   \end{equation*}
   \begin{example}
       Let $\Om=\mf{B}^n$ be the unit ball in $\mf{C}^n$. Recall that
       \[S_{\mf{B}^n}(z, w)=\frac{(n-1)!}{2\pi^n}\frac{1}{(1-z\cdot \ov{w})^{n}},\]
       where $z\cdot\ov{w}=\sum_{\ell=1}^nz_\ell\ov{w}_\ell$, and hence \[\frac{\partial^{2}g_{\mf{B}^n}(z)}{\partial z_{j}\partial\overline{z}_{k}}=n\frac{\partial^{2}}{\partial z_{j}\partial\overline{z}_{k}}\log\frac{1}{1-|z|^2}=n\left(\frac{\de_{j\ov{k}}(1-|z|^2)+\ov{z_j}z_k}{(1-|z|^2)^2}\right).\]
      
   \end{example}
   \begin{example}(\cite{bb24})\label{Anuulus}
       For $r\in (0, 1),$ let $\Om=A_r=\{z\in\mf{C}: r<|z|<1\}$ denotes the annulus. It is known that
        \[S_{A_{r}}(z, w)=\frac{1}{2\pi}\sum_{n=-\infty}^{\infty}\frac{(z\overline{w})^{n}}{1+r^{2n+1}}.\]
        Now, let $\wp$ be the Weierstrass elliptic $\wp$-function with periods $2\om_1=-2\log r$ and $2\om_3=2i\pi$. Then, the Szeg\"o metric on $A_r$ can be expressed as
       \[
ds_{s_{A_{r}}}^{2}=\frac{\wp\big(2\log \vert z \vert\big)-\wp\big(2\log \vert z \vert+\om_1+\om_3\big) }{\vert z \vert^2}|dz|^{2}.
\]
   \end{example}
We now recall some definitions to state our main result.
 \begin{definition}\label{spiral-geodesic}
    Let $(X, \ti{g})$ be a complete Riemannian manifold.
    \begin{itemize}
        \item [(a)] A geodesic $c:\mathbb{R}\to X$ is called a \textit{geodesic spiral}, if $c$ is non-closed and there exists a compact subset $K$ of $X$ such that $c(t)\in K$ for all $t\geq 0$.
          \item [(b)] Let $c:\mathbb{R}\to X$ be a nontrivial geodesic, and let $x_0\in X$. If there exist $t_{1},~t_{2}\in\mathbb{R}\text{ with } t_1<t_2$ such that $c(t_{1})=c(t_{2})=x_0$, then the segment $c_{|_{[t_{1},t_{2}]}}$ is called a geodesic loop passing through $x_0$.
    \end{itemize}
\end{definition}
 \begin{theorem}\label{main}
 Let $\Om\subset\mbb {C}^{n}$ be a $C^\infty$-smoothly bounded strongly pseudoconvex domain which is not simply connected, equipped with the Szeg\"o metric $ds_{s_\Om}^2$. We have
 \begin{itemize}
     \item [(a)] Every nontrivial homotopy class of loops in $\Om$ consists a closed geodesic. 
     \item[(b)] Suppose the universal cover of $\Om$ is infinitely sheeted. Then, for each point $z_0\in \Om$ that does not lie on a closed geodesic, there exists a geodesic spiral passing through $z_0$.
 \end{itemize}
 \end{theorem}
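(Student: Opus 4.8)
The plan is to read both statements as facts about the complete Kähler manifold $(\Om, ds_{s_\Om}^2)$ and to follow the Riemannian-geometric scheme that Herbort used for the Bergman metric in \cite{hg83}; the only place where the analytic nature of the Szeg\"o kernel enters is in certifying that the metric is complete and blows up at $\pa\Om$ at the expected rate. So the first step is to record, from the known boundary asymptotics of the Szeg\"o kernel on a $C^\infty$-smoothly bounded strongly pseudoconvex domain, that $S_\Om(z,z)\asymp \delta(z)^{-n}$ with $\delta(z)=\mathrm{dist}(z,\pa\Om)$, together with the corresponding first- and second-derivative bounds. From these the potential $g_\Om=\log S_\Om(\cdot,\cdot)$ is a strongly plurisubharmonic exhaustion, and $ds_{s_\Om}^2$ is comparable near $\pa\Om$ to the reference form whose complex-normal coefficient grows like $\delta^{-2}$ and whose complex-tangential coefficients grow like $\delta^{-1}$; in particular the metric is complete, extending the planar analysis of \cite{bb24}. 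By Hopf--Rinow, closed bounded subsets of $\Om$ are then compact, and this is the property that drives both parts.

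For part (a), fix a nontrivial free homotopy class $\al$ and set $L=\inf\{\ell(c): c \text{ a loop in } \al\}$, where $\ell$ is Szeg\"o length. Take a minimizing sequence $c_k$, each parametrized proportionally to arc length on $[0,1]$, so that each $c_k$ is $\ell(c_k)$-Lipschitz with $\mathrm{diam}(c_k)\le \ell(c_k)\le L+1$. The key lemma is that the $c_k$ lie in one fixed compact set $K_0\subset\Om$: each $c_k$ may be taken to meet a fixed interior core, and then the diameter bound plus completeness confine it to a compact metric ball, the alternative being that a representative of $\al$ is squeezed into a thin collar $\{\delta<\ep\}$, which the boundary growth forbids (see below). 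Granting this, Arzel\`a--Ascoli gives a uniformly convergent subsequence with limit $c$; since uniformly close loops are freely homotopic, $c\in\al$, hence $c$ is nonconstant and $L=\ell(c)>0$. A length-minimizer is a smooth closed geodesic by the first-variation formula and elliptic regularity, proving (a). I expect the compactness lemma, i.e. ruling out escape to $\pa\Om$, to be the main obstacle, and it is exactly where strong pseudoconvexity is indispensable.

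For part (b), the hypothesis that the universal cover is infinitely sheeted means $\pi_1(\Om)$ is infinite, so it contains an element $\al$ of infinite order, and by (a) each class $\al^m$, $m\ge 1$, carries a closed geodesic. Fix $z_0$ lying on no closed geodesic. For each $m$ I would take a shortest geodesic loop $\tau_m$ based at $z_0$ in the class $\al^m$; these exist by the constrained minimization of part (a) with the base point held fixed, and the same no-escape estimate keeps all of them in one compact set $K\subset\Om$. Their lengths must diverge, since a bounded convergent sequence of based loops would eventually be freely homotopic whereas the $\al^m$ are distinct. The unit initial directions $\dot\tau_m(0)\in T_{z_0}\Om$ subconverge to some $v$; let $c$ be the geodesic with $c(0)=z_0$, $\dot c(0)=v$, defined on all of $\R$ by completeness. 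For every fixed $t\ge 0$ one has $t\le \ell(\tau_m)$ eventually and $\tau_m(t)\in K$, whence $c(t)=\lim \tau_m(t)\in K$; thus $c$ has relatively compact forward orbit. Finally $c$ cannot be closed, for a closed geodesic through $z_0$ is excluded by hypothesis, so $c$ is a \emph{geodesic spiral} through $z_0$. The delicate points are that $\tau_m$ exists and that all $\tau_m$ sit in a common compact $K$, both again reducing to the boundary behavior.

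The subtlest mechanism throughout is the same: homotopically essential curves cannot be compressed into a neighborhood of $\pa\Om$. Concretely, using the two-sided comparison of $ds_{s_\Om}^2$ with $\delta^{-2}\,|dz_{\mathrm{normal}}|^2+\delta^{-1}\,|dz_{\mathrm{tang}}|^2$, a loop that ventures from a fixed interior level $\{\delta=\ep_0\}$ toward $\{\delta=\ep\}$ accrues normal length of order $\int_\ep^{\ep_0}\delta^{-1}\,d\delta$, while a loop trapped in $\{\delta<\ep\}$ and winding about a handle accrues tangential length of order $\ep^{-1/2}$ times its Euclidean girth; both diverge as $\ep\to 0$. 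This is precisely what prevents the minimizing sequences in (a) and the loops $\tau_m$ in (b) from escaping to the boundary, and it is the step I expect to require the most care to make quantitative for the Szeg\"o metric.
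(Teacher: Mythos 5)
Your part (a) is essentially a direct re-proof of Herbort's general result \cite[Theorem 1.1]{hg83} (minimize length in the free class, confine the minimizing sequence via the boundary blow-up of the metric, apply Arzel\`a--Ascoli); the paper instead just verifies Herbort's hypotheses, getting completeness and the lower bound $ds_{s_\Om}^2(p,X)\geq S\|X\|^2$ near $\pa\Om$ from the domination $ds_{s_\Om}\geq ds_{c_\Om}$ together with Henkin's lemma. That route is sound in outline, although you assert rather than prove the two-sided comparison with $\de^{-2}|dz_{\mathrm{normal}}|^2+\de^{-1}|dz_{\mathrm{tang}}|^2$; for (a) only the lower bound is needed, and it does require an analytic input (the Fefferman/Boutet de Monvel--Sj\"ostrand expansion, or the Carath\'eodory comparison) that your sketch leaves implicit.

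Part (b), however, has a genuine gap at its central step: the claim that all the shortest based geodesic loops $\tau_m$ lie in \emph{one} compact set $K$. Your no-escape estimate (a homotopically essential loop trapped in the collar $\{\de<\ep\}$ has length $\gtrsim \ep^{-1/2}$) only excludes loops of \emph{bounded} length from the collar, but you have just argued that $\ell(\tau_m)\to\infty$; a very long geodesic loop through $z_0$ could a priori dip arbitrarily close to $\pa\Om$ along the way, and nothing in your argument forbids this. Supplying the uniform compact set is precisely the technical heart of the paper: Theorem \ref{geo-spi-l1} shows, via the Fefferman--Boutet de Monvel--Sj\"ostrand expansion and the formula of Lemma \ref{eog2} for $(\rho\circ c)''$, that $\rho\circ c$ cannot have an interior critical point with $(\rho\circ c)''\le 0$ in the shell $\{\rho>-\ep\}$, so no geodesic loop (of any length) attains its maximum of $\rho$ there; this barrier is what yields $K=\{\rho\le-\ep_1\}$ independently of $m$. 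Without it, Lemma \ref{Lem-Her-cpt} cannot be invoked and your limit geodesic need not have relatively compact forward orbit. A secondary, smaller gap: you deduce from ``$\pi_1(\Om)$ infinite'' that it contains an element of infinite order, which is not known for general finitely presented groups; Herbort's lemma (and the paper) avoids this by working directly with infinitely many distinct based classes rather than powers of a single element.
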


  \textbf{Acknowledgements.} The author expresses gratitude to D. Borah for suggesting the problem and to D. Kar for his valuable suggestions.

\section{Closed geodesics}\label{closed-geodesics}
In this section, we prove Theorem \ref{main} (a). We first recall a general result of Herbort.
\begin{theorem}\cite[Theorem 1.1]{hg83}\label{clo-herb}
Let $G\subset\mf{R}^{N}$ be a bounded domain that is not simply connected such that the following holds: 
\begin{itemize}
\item [(i)] For each $p \in \ov G$, there is an open neighbourhood $U\subset\mf{R}^N$ of $p$ such that $G \cap U$ is simply connected.
\item [(ii)] The domain $G$ is equipped with a complete Riemannian metric $\ti{g}$ which has the following property: (B) Given $S>0$, there exists $\de>0$, such that for each $p \in G$ with $d(p, \pa G)<\de$ and every $X \in \mf{R}^N$, $\ti{g}(p, X) \geq S\|X\|^2$, where $\|\cdot \|$ denotes the Euclidean norm. 
\end{itemize}
Then, every nontrivial homotopy class of loops in $G$ contains a closed geodesic for $\ti{g}$.
\end{theorem}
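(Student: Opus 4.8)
The plan is to prove existence by the direct method in the calculus of variations, minimizing $\ti g$-length over the prescribed free homotopy class and using hypotheses (i) and (ii) to recover the compactness that would be automatic on a closed manifold. Fix a nontrivial free homotopy class $[\ga]$ of loops in $G$ and set $\ell=\inf\{L(c):c\in[\ga]\}$, where $L$ denotes $\ti g$-length; choose a minimizing sequence $c_k\in[\ga]$, each parametrized on $[0,1]$ with constant speed, so that $L(c_k)\to\ell$ and $L(c_k)\le\sqrt M$ for some fixed $M$. Two uniform constants come from hypothesis (i): covering the compact set $\ov G$ by the neighbourhoods for which $G\cap U_p$ is simply connected and invoking the Lebesgue number lemma yields $\rho>0$ so that any loop of Euclidean diameter less than $\rho$ lies in some $G\cap U_p$ and is therefore null-homotopic in $G$. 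Hence every loop in $[\ga]$ has Euclidean diameter, and so Euclidean length, at least $\rho$ (indeed length $\ge 2\rho$). Combined with the fact that $\ti g$ dominates a fixed positive multiple of the Euclidean metric on all of $G$ (bounded below on compacta, and bounded below in the boundary collar by (B)), this gives $\ell>0$, so any eventual limit will be nonconstant.

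The decisive step is confinement: I claim all $c_k$ lie in a single compact $K\subset G$. Fix $S$ large and let $\de=\de(S)$ be given by property (B), so that $\ti g(p,X)\ge S\|X\|^2$ whenever $d(p,\pa G)<\de$. If some $c_k$ lay entirely in the collar $\{d(\cdot,\pa G)<\de\}$, its $\ti g$-length would be at least $\sqrt S$ times its Euclidean length, hence at least $2\rho\sqrt S$; choosing $S$ with $2\rho\sqrt S>\sqrt M$ rules this out, so each $c_k$ meets the compact set $K_0=\{d(\cdot,\pa G)\ge\de\}$ at some point $q_k$. Since $L(c_k)\le\sqrt M$, every point of $c_k$ lies within $\ti g$-distance $\sqrt M/2$ of $q_k$ along the shorter arc, so $c_k$ is contained in the closed $\ti g$-neighbourhood of radius $\sqrt M/2$ about $K_0$. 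Completeness of $\ti g$ and the Hopf--Rinow theorem make this neighbourhood a compact subset $K$ of $G$, independent of $k$.

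On $K$ the metric $\ti g$ is uniformly comparable to the Euclidean metric, so the constant-speed loops $c_k$ are uniformly Euclidean-Lipschitz; by Arzel\`a--Ascoli a subsequence converges uniformly to a loop $c_\infty:[0,1]\to K$. Uniform closeness lets me join $c_k$ to $c_\infty$ by the straight-line homotopy, which stays in $G$ because $K$ is compactly contained in $G$; hence $c_\infty\in[\ga]$, and it is nonconstant since $\ell>0$. Lower semicontinuity of length under uniform convergence gives $L(c_\infty)\le\liminf L(c_k)=\ell$, while $c_\infty\in[\ga]$ forces $L(c_\infty)\ge\ell$, so $c_\infty$ realizes the minimum. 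A length-minimizer in a free homotopy class is a local minimizer, so replacing any short sub-arc by the unique minimizing geodesic of a convex neighbourhood (a homotopy that does not change the class) cannot shorten it; this forces $c_\infty$, suitably reparametrized, to be a smooth geodesic, and the same reasoning applied across the basepoint shows it closes up smoothly into a closed geodesic in $[\ga]$.

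I expect the confinement step---ruling out escape to $\pa G$---to be the main obstacle, and it is precisely where both hypotheses are indispensable: property (B) forbids a minimizing loop from hiding in the boundary collar, hypothesis (i) supplies the uniform lower bound on Euclidean length that makes the collar estimate bite, and completeness then closes the argument through Hopf--Rinow.
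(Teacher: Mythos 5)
Your proposal is correct, but note that the paper itself gives no proof of this statement: it is quoted verbatim as Herbort's Theorem 1.1 from \cite{hg83} and used as a black box, so the only meaningful comparison is with Herbort's original argument, which your proof essentially reconstructs. Your route is the classical direct method: the Lebesgue-number argument from hypothesis (i) gives a uniform positive lower bound on the Euclidean diameter (hence length) of any loop in a nontrivial class; property (B) then forces any loop of bounded $\ti g$-length to meet the compact core $\{d(\cdot,\pa G)\ge\de\}$, and completeness plus Hopf--Rinow confines the whole minimizing sequence to a single compact set, after which Arzel\`a--Ascoli, lower semicontinuity of length, the straight-line homotopy (valid since the limit lies in a set compactly contained in $G$), and the convex-neighbourhood regularity argument finish the job. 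All the steps check out, including the non-circular order of choices ($M$ fixed by the minimizing sequence first, then $S$ with $2\rho\sqrt S>\sqrt M$, then $\de(S)$); this is precisely the mechanism by which Herbort's two hypotheses substitute for compactness of the ambient manifold, so your proof is a faithful, correct stand-in for the cited result rather than a genuinely different route.
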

Thus, it is crucial to establish the completeness of the Szeg\"o metric, which is provided by the following result.
\begin{lemma}\label{sgo-complete}
    Let $\Om\subset\mbb {C}^{n}$ be a $C^\infty$-smoothly bounded strongly pseudoconvex domain. Then, the Szeg\"o metric $ds_{s_\Om}$ is complete.
\end{lemma}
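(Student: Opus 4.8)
The plan is to establish completeness by showing that the Szeg\"o metric blows up fast enough near the boundary $\pa\Om$ to make the boundary lie at infinite distance. The key analytic input is the boundary behavior of the Szeg\"o kernel $S_\Om(z,z)$ on a $C^\infty$-smoothly bounded strongly pseudoconvex domain, for which there is a Fefferman-type asymptotic expansion: $S_\Om(z,z) \sim c\, \psi(z)^{-n}$ as $z\to\pa\Om$, where $\psi(z)$ is comparable to the boundary distance $d(z,\pa\Om)$ and $c>0$. This expansion, together with its derivative estimates, is due to Boutet de Monvel--Sj\"ostrand and Fefferman (for the Bergman kernel the analogue is classical, and the Szeg\"o kernel admits a parallel parametrix construction). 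I would quote this as the principal external ingredient.

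From here the argument is a standard localization-and-comparison scheme. First I would reduce to a boundary chart: fix $p\in\pa\Om$, choose coordinates in which $\pa\Om$ is locally a graph, and use strong pseudoconvexity to compare $\Om$ near $p$ from outside by a ball and from inside by a suitable strongly pseudoconvex model whose Szeg\"o metric is explicitly understood (the unit ball computation in the first Example is the model case, where $\partial^2 g/\partial z_j\partial\bar z_k$ behaves like $\delta_{jk}/(1-|z|^2)$ in tangential directions and like $1/(1-|z|^2)^2$ in the complex-normal direction). Using the logarithmic structure $g_\Om=\log S_\Om(z,z)$ and differentiating the asymptotic expansion twice, I would show that the metric tensor $(\partial^2 g_\Om/\partial z_j\partial\bar z_k)$ has eigenvalues bounded below by $c/d(z,\pa\Om)$ in tangential directions and by $c/d(z,\pa\Om)^2$ in the normal direction, for $z$ close to $\pa\Om$. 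The crucial consequence is that for any $C^1$ curve $\gamma$ approaching the boundary, the component of velocity in the complex-normal direction contributes a length element bounded below by $|d(d(\gamma(t),\pa\Om))|/d(\gamma(t),\pa\Om)$, whose integral diverges logarithmically; hence every curve tending to $\pa\Om$ has infinite Szeg\"o length.

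With the boundary estimate in hand, completeness follows from the Hopf--Rinow theorem in its standard form: a length space is complete as a metric space if and only if it is geodesically complete, and it suffices to show that every Cauchy sequence for the Riemannian distance converges in $\Om$. Any sequence $\{z_k\}$ that is Cauchy for the Szeg\"o distance cannot accumulate at $\pa\Om$, because approaching the boundary forces infinite length and hence infinite distance by the divergence just established; therefore $\{z_k\}$ stays in a compact subset of $\Om$, where the Szeg\"o metric is smooth and comparable to the Euclidean metric, so it converges. This yields completeness.

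The main obstacle will be making the derivative estimates on $S_\Om(z,z)$ rigorous and uniform near the whole boundary: the pointwise leading asymptotic $S_\Om(z,z)\sim c\,d(z,\pa\Om)^{-n}$ is not by itself enough, since completeness needs control of the \emph{second} logarithmic derivatives of $S_\Om$, i.e. of the full metric tensor, with the correct anisotropic rates and with remainder terms that do not destroy the lower bounds. I would handle this by invoking the differentiable (rather than merely pointwise) form of the Fefferman/Boutet de Monvel--Sj\"ostrand expansion, which permits term-by-term differentiation of the expansion $S_\Om(z,z)=\varphi(z)\,\psi(z)^{-n}+\chi(z)\log\psi(z)$ with $\varphi,\chi$ smooth up to the boundary and $\psi$ a smooth defining function; differentiating $\log S_\Om$ then isolates the dominant term $n\,\partial\bar\partial(-\log\psi)$, whose positivity and anisotropy are governed precisely by the Levi form, so that strong pseudoconvexity guarantees the required lower bounds. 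Assembling the localization, the anisotropic eigenvalue estimate, and the Hopf--Rinow step completes the proof.
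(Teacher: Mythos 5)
Your proposal is correct in outline, but it takes a genuinely different route from the paper. The paper proves completeness by comparison: it shows that the Szeg\"o metric dominates the Carath\'eodory metric (via the maximal-domain-function representation $ds_{s_\Om}^2 = J^{(1)}_\Om/J^{(0)}_\Om$ and the test function $f(z)=S_\Om(z,z_0)\phi(z)/\sqrt{S_\Om(z_0,z_0)}$, exactly as in the Bergman-metric argument of Jarnicki--Pflug), and then invokes the known completeness of the Carath\'eodory metric on $C^\infty$-smoothly bounded strongly pseudoconvex domains. You instead argue directly from the Fefferman/Boutet de Monvel--Sj\"ostrand expansion, extracting the anisotropic lower bound $n\,\pa\ov\pa(-\log|\rho|)(X,\ov X)\geq c|X|^2/|\rho| + |\pa\rho\cdot X|^2/\rho^2$ and showing the normal component forces logarithmic divergence of length along any curve reaching $\pa\Om$. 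Both arguments are sound. The paper's route is shorter and outsources all boundary analysis to a standard reference; yours is heavier but self-contained modulo the kernel expansion, and it yields quantitative blow-up rates for the metric tensor that the paper in fact needs anyway in Section 3 (your ``main obstacle''---controlling the second logarithmic derivatives of the remainder $h=\Phi+\Psi|\rho|^n\log|\rho|$---is precisely what the paper's Lemma 3.2 carries out, and those estimates confirm that the correction terms are of strictly lower order than the leading Levi term for every $n$, so your lower bounds survive). One small point of care: your eigenvalue phrasing should be replaced by the quadratic-form estimate above, since the ``tangential'' and ``normal'' directions are only defined up to the splitting induced by $\pa\rho$; once stated that way, the bound $|(\rho\circ\gamma)'|/|\rho(\gamma)|\lesssim ds_{s_\Om}(\gamma,\gamma')$ and the Hopf--Rinow step go through exactly as you describe.
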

This lemma follows from the fact that the Carath\'eodory metric is complete on $C^\infty$-smoothly bounded strongly pseudoconvex domains. Since the Szeg\"o metric dominates the Carath\'eodory metric, which can be shown using similar lines of reasoning as for the Bergman metric---see \cite[Theorem 12.8.1]{jp93}. To see this, we start by recalling the Carath\'eodory metric $ds_{c_\Om}$ on a bounded domain $\Om$: Let $z_0\in\Om, \z\in\mf{C}^n$, 
\[ds_{c_\Om}(z_0,\z)=\sup\left\{\left(\sum_{j=1}^n\Bigg|\frac{\pa\phi(z_0)}{\pa z_j}\z_j\Bigg|^2\right)^{\frac{1}{2}}:\phi:\Om\to\mf{D}\text{ holomorphic and }\phi(z_0)=0\right\}.\]
\begin{lemma}
    For all $z\in\Om,~\z\in\mf{C}^n$, we have 
    \[ds_{s_\Om}(z,\z)\geq ds_{c_\Om}(z,\z).\]
\end{lemma}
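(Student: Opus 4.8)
The plan is to show the Szegő metric dominates the Carathéodory metric by producing, for each competitor $\phi:\Om\to\mf{D}$ in the Carathéodory extremal problem, an element of $H^2(\pa\Om)$ whose contribution to the Szegő kernel captures the derivative data of $\phi$. This mirrors the classical Bergman-versus-Carathéodory argument in \cite[Theorem 12.8.1]{jp93}, replacing the Bergman space $L^2_h(\Om)$ by the Hardy space $H^2(\pa\Om)$.

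First I would fix $z_0\in\Om$ and $\z\in\mf{C}^n$, and let $\phi:\Om\to\mf{D}$ be any holomorphic map with $\phi(z_0)=0$. The idea is to transplant $\phi$ into the Hardy space: since $\phi$ is bounded by $1$ and $\Om$ has $C^\infty$-smooth boundary, $\phi\in H^2(\pa\Om)$, and more generally so are its powers. I would set up the reproducing identity
\[
h(z_0)=\int_{\pa\Om}h(w)\,S_\Om(z_0,w)\,d\sigma_E,\qquad h\in H^2(\pa\Om),
\]
and differentiate it in the $\z$-direction at $z_0$ to express directional derivatives of elements of $H^2(\pa\Om)$ through the kernel. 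The key extremal characterization I would invoke is the standard one: writing $T(z_0,\z)=\sum_j \tfrac{\pa}{\pa z_j}h(z_0)\,\z_j$, the quantity $S_\Om(z_0,z_0)^{-1}\sup\{|T(z_0,\z)|^2 : \|h\|_{H^2}\le 1,\ h(z_0)=0\}$ equals the Szegő metric $ds_{s_\Om}(z_0,\z)^2$, exactly as the analogous Bergman statement holds. This reduces the inequality to exhibiting a single admissible $h$ built from $\phi$.

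Next I would choose $h = \phi\cdot S_\Om(\cdot,z_0)/\sqrt{S_\Om(z_0,z_0)}$ (or a suitably normalized variant), so that $h\in H^2(\pa\Om)$ with $h(z_0)=0$ because $\phi(z_0)=0$, and compute its directional derivative: by the product rule the term in which $\phi$ is differentiated survives at $z_0$, yielding $\sum_j \tfrac{\pa\phi}{\pa z_j}(z_0)\,\z_j$ times $\sqrt{S_\Om(z_0,z_0)}$. Estimating $\|h\|_{H^2}\le 1$ uses $|\phi|\le 1$ on $\pa\Om$ together with the reproducing property $\|S_\Om(\cdot,z_0)\|_{H^2}^2=S_\Om(z_0,z_0)$. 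Feeding this admissible $h$ into the extremal characterization produces
\[
ds_{s_\Om}(z_0,\z)^2 \ge \Bigl|\sum_{j=1}^n \frac{\pa\phi(z_0)}{\pa z_j}\z_j\Bigr|^2,
\]
and taking the supremum over all competitors $\phi$ gives $ds_{s_\Om}(z_0,\z)\ge ds_{c_\Om}(z_0,\z)$.

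The main obstacle I anticipate is the extremal characterization of $ds_{s_\Om}$ itself: unlike the Bergman metric, whose variational description via reproducing kernels is textbook, I must verify that the second-order logarithmic derivative of $\log S_\Om(z,z)$ genuinely equals the Hardy-space extremal quantity above. The clean way is to use the standard Kähler-potential computation, writing the metric coefficients in terms of $S_\Om$, $\pa S_\Om$, and $\pa\bar\pa S_\Om$, and recognizing the resulting expression as a Gram-type determinant that coincides with the constrained supremum over $h\in H^2(\pa\Om)$; this is precisely where the phrase ``similar lines of reasoning as for the Bergman metric'' in \cite{jp93} must be made rigorous. A secondary technical point is ensuring $\phi\in H^2(\pa\Om)$ with the needed boundary control, which follows from boundedness of $\phi$ and smoothness of $\pa\Om$, so I expect it to be routine once the extremal formula is in hand.
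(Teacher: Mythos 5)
Your proposal is correct and follows essentially the same route as the paper: both reduce the inequality to the extremal (maximal domain function) characterization $ds_{s_\Om}(z_0,\z)^2=J_\Om^{(1)}(z_0,\z)/J_\Om^{(0)}(z_0,\z)$ with $J_\Om^{(0)}(z_0,\z)=S_\Om(z_0,z_0)$, and then test it with the admissible function $f=\phi\,S_\Om(\cdot,z_0)/\sqrt{S_\Om(z_0,z_0)}$. The paper justifies the extremal formula via an adapted orthonormal basis of $H^2(\pa\Om)$, which is the same standard device you describe.
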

\begin{proof}
  First, we express the Szeg\"o metric in terms of maximal domain functions $J_\Om^{(j)},~j=0,1$, which are defined by
 \begin{align*}
     J_\Om^{(0)}(z, \z)&=\sup_{f\in H^{2}(\pa\Om)}\Big\{\big \vert f (z)\big \vert^{2}:  \|f\|_{H^{2}(\pa\Om)}\leq 1\Big\},\text{ and}\\
      J_\Om^{(1)}(z,\z)&=\sup_{f\in H^{2}(\pa\Om)}\left\{\sum_{j=1}^n\Bigg|\frac{\pa f(z)}{\pa z_j}\z_j\Bigg|^2: f(z)=0,\; \|f\|_{H^{2}(\pa\Om)}\leq 1\right\},
 \end{align*}
for $z\in\Om,~\z\in\mf C^n$. Fix $z_0\in\Om,~ \z\in\mf{C}^n$, there exists an orthonormal basis $\{\phi_{k}\}_{k\geq 0}$ of $H^2(\pa\Om)$ such that $\text{for all }k\geq 2$,
\begin{equation}\label{o-n-b}
   \phi_0(z_0)\neq 0,\;\;\; \phi_{k-1}(z_0)=0,\;\;\;\sum_{j=1}^n\frac{\pa\phi_k(z_0)}{\pa z_j}\z_j=0.   
\end{equation}
Using (\ref{o-n-b}), it can shown that 
  \[ds_{s_\Om}(z_0,\z)^2=\frac{J_\Om^{(1)}(z_0,\z)}{J_\Om^{(0)}(z_0,\z)},\;\;\text{ and }\;\;J_\Om^{(0)}(z_0, \z)=S_\Om(z_0, z_0).\]

  \noindent Now, we define \[f(z)=\frac{S_\Om(z, z_0)}{\sqrt{S_\Om(z_0,z_0)}}\phi(z),\]
  where $\phi:\Om\to\mf{D}$ is an arbitrary holomorphic function with $\phi(z_0)=0$. It is evident that $\|f\|_{H^2(\pa\Om)}\leq 1$ and $f(z_0)=0$. Therefore, it can be concluded that \[ds_{s_\Om}(z_0,\z)^2\geq ds_{c_\Om}(z_0,\z)^2.\]
\end{proof}
\begin{proof}[\textbf{Proof of Theorem \ref{main} (a)}]
    It is enough to verify the hypothesis of the Theorem \ref{clo-herb} for $G=\Om$ and $\ti{g}=ds_{s_{\Om}}^2$. Clearly, Condition (i) is satisfied by the smoothness of $\pa\Om$. For Condition (ii), observe that the completeness of the Szeg\"o metric follows from the preceding lemma. Therefore, it remains to verify Property (B), which can be deduced from \cite[Lemma 1]{h73}. Thus, the proof is complete.
\end{proof}

\begin{remark}
 The unit ball $\mf B^n$ has no nontrivial closed geodesic for the Szeg\"o metric because $(\mf B^n, ds_{s_{\mf {B}^n}}^2)$ is a Hadamard-Cartan manifold.   
\end{remark}



\section{Geodesic spirals}\label{geodesic-spirals}
Our next goal is to prove Theorem \ref{main}~(b). We start by recalling a result of Herbort.
\begin{lemma}(\cite{hg83})\label{Lem-Her-cpt}
    Let $(X, \ti{g})$ be a complete Riemannian manifold whose universal cover is infinitely sheeted, and let $x_0\in X$ such that there are no closed geodesic passes. If there exists a compact subset $K$ of $X$ with the property that each geodesic loop passing through $x_0$ lies within $K$, then there exists a geodesic spiral for $\ti g$ passing through $x_0$.
\end{lemma}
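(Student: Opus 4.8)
The plan is to realize the spiral as a limit of geodesic loops through $x_0$ whose lengths tend to infinity, using the infinite fundamental group to manufacture such loops. First I would pass to the universal covering $\pi\colon\widetilde X\to X$ equipped with the pulled-back metric, fix a lift $\ti x_0$ of $x_0$, and consider the deck transformation group $\Ga\cong\pi_1(X,x_0)$ acting on $\widetilde X$. Since the covering is infinitely sheeted, $\Ga$ is infinite; because the action is free and properly discontinuous, the orbit $\Ga\cdot\ti x_0$ is an infinite discrete subset of $\widetilde X$, so only finitely many of its points lie in any metric ball centred at $\ti x_0$. Hence I can select $\ga_n\in\Ga\setminus\{\mathrm{id}\}$ with $\ell_n:=d_{\widetilde X}(\ti x_0,\ga_n\ti x_0)\to\infty$. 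By Hopf--Rinow (completeness is hypothesized), each pair $\ti x_0,\ga_n\ti x_0$ is joined by a unit-speed minimizing geodesic, and its projection under $\pi$ is a unit-speed geodesic segment $\sigma_n\colon[0,\ell_n]\to X$ with $\sigma_n(0)=\sigma_n(\ell_n)=x_0$. Each $\sigma_n$ is therefore the segment of a geodesic loop through $x_0$, so by hypothesis $\sigma_n([0,\ell_n])\subset K$.

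Next I would extract a limiting geodesic. The initial velocities $v_n:=\sigma_n'(0)$ lie in the compact unit sphere $U_{x_0}X\subset T_{x_0}X$, so after passing to a subsequence $v_n\to v_\infty\in U_{x_0}X$. Let $c\colon\R\to X$ be the complete geodesic determined by $c(0)=x_0$ and $c'(0)=v_\infty$, i.e. $c(t)=\exp_{x_0}(t\,v_\infty)$; I claim $c$ is a geodesic spiral through $x_0$. For confinement, fix $T>0$. Since $\ell_n\to\infty$ we have $\ell_n\ge T$ for all large $n$, and on $[0,T]$ the loop is $\sigma_n(t)=\exp_{x_0}(t\,v_n)\in K$. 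Smooth dependence of the geodesic flow on initial data gives $\exp_{x_0}(t\,v_n)\to\exp_{x_0}(t\,v_\infty)$ uniformly on $[0,T]$, and since $K$ is closed we obtain $c([0,T])\subset K$; as $T>0$ was arbitrary, $c(t)\in K$ for every $t\ge0$. For non-closedness, note that $c$ passes through $x_0=c(0)$, so were $c$ closed it would be a closed geodesic through $x_0$, contradicting the hypothesis; thus $c$ is non-closed. By Definition \ref{spiral-geodesic}(a), $c$ is then a geodesic spiral passing through $x_0$.

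I expect the main obstacle to be the very first step: producing geodesic loops whose lengths diverge while remaining inside the fixed compact set $K$. The divergence of the lengths is precisely what prevents the limit from being a mere geodesic loop and forces a genuine non-closed spiral, and it is exactly here that the infinitely-sheeted hypothesis enters, through the discreteness of the orbit $\Ga\cdot\ti x_0$ (proper discontinuity of the deck action). A secondary technical point is that confinement of the limit $c$ on all of $[0,\infty)$ must be recovered from confinement of the $\sigma_n$ only on the fixed initial windows $[0,T]\subset[0,\ell_n]$; reconciling this requires using $\ell_n\to\infty$, the closedness of $K$, and continuous dependence on initial conditions simultaneously. The hypothesis that no closed geodesic passes through $x_0$ is then invoked only at the end, to guarantee that the limit geodesic is non-closed.
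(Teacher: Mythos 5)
Your argument is correct and complete: the paper itself gives no proof of this lemma (it is quoted from \cite{hg83}), and your construction --- producing geodesic loops through $x_0$ of divergent length from the unbounded, discrete orbit of the infinite deck group in the complete universal cover, confining them in $K$ by hypothesis, and passing to a limit of initial directions that must be non-closed since $x_0$ lies on no closed geodesic --- is essentially Herbort's original argument. No gaps to report.
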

From Lemma \ref{sgo-complete}, the Szeg\"o metric on smoothly bounded strongly pseudoconvex domains is complete. Therefore, the existence of a geodesic spiral for $ds_{s_\Om}^2$ reduces to identifying an appropriate compact subset $K$ of $\Om$---which is addressed by the following theorem.
\begin{theorem}\label{geo-spi-l1}
    Let $\Om=\{\rho<0\}\subset\mf{C}^n\text{ where }n\geq 2,$ be a $C^\infty$-smoothly bounded strongly pseudoconvex domain with a $C^\infty$-smooth strongly plurisubharmonic defining function $\rho$. Then there exists $\ep=\ep(\Om)>0$ such that for every geodesic $c:\mf{R}\to\Om$ of the Szeg\"o metric $ds_{s_{\Om}}^2$ satisfying $(\rho\circ c)(0)>-\ep$ and $(\rho\circ c)'(0)=0$, it follows that $(\rho\circ c)'(0)>0$.
\end{theorem}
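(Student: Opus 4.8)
The plan is to read the quantity $(\rho\circ c)''(0)$ as a Riemannian Hessian and to extract its sign from the boundary asymptotics of the Szeg\"o metric (interpreting the conclusion as $(\rho\circ c)''(0)>0$, the local strict minimum that confines geodesic loops). Along a geodesic the acceleration is absorbed by the connection, so $\frac{d^2}{dt^2}(\rho\circ c)\big|_{0}=\operatorname{Hess}_{g}\rho(X,X)$ with $X=\dot c(0)$ and $g=(g_{j\bar k})$ the Szeg\"o metric of potential $g_\Om=\log S_\Om(z,z)$. Since $g$ is K\"ahler, its only nonvanishing Christoffel symbols are $\Ga^{m}_{jl}=\sum_{\bar k}g^{m\bar k}\partial_j g_{l\bar k}$ and their conjugates, and writing $\rho_j=\partial\rho/\partial z_j$ etc.\ one gets
\[\operatorname{Hess}_g\rho(X,X)=2\sum_{j,k}\rho_{j\bar k}X_j\ov{X_k}+2\Re\Big[\sum_{j,l}\big(\rho_{jl}-\textstyle\sum_m\rho_m\Ga^m_{jl}\big)X_jX_l\Big].\]
The first sum is the Levi form, hence $\geq\la_0|X|^2>0$ near $\pa\Om$ by strong plurisubharmonicity of $\rho$; the whole difficulty sits in the holomorphic $(2,0)$ term. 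I would also record the hypothesis in this language: setting $a:=\sum_j\rho_j X_j$, the condition $(\rho\circ c)'(0)=0$ reads $2\Re a=0$, so $a$ is purely imaginary and $a^2=-|a|^2\le 0$.

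Next I would bring in the boundary behaviour of the kernel. For a $C^\infty$-smoothly bounded strongly pseudoconvex domain the Fefferman-type expansion of the Szeg\"o kernel gives $S_\Om(z,z)=\psi(z)(-\rho)^{-n}\big(1+o(1)\big)$ with $\psi$ smooth and positive up to $\pa\Om$, whence $g_\Om=-n\log(-\rho)+O(1)$ and, writing $\de:=-\rho>0$,
\[g_{l\bar k}=\frac{n}{\de^2}\rho_l\rho_{\bar k}+\frac{n}{\de}\rho_{l\bar k}+(\text{bounded}).\]
From this leading form the normalization $\|X\|_g=1$ (which we may assume, since the sign of $(\rho\circ c)''(0)$ at a critical point is invariant under affine reparametrization of a geodesic) forces $|a|=O(\de)$ and the Euclidean length $|X|=O(\sqrt\de)$; in particular $X\to0$ as $z\to\pa\Om$. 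To compute the connection term I would fix, at $c(0)$, a unitary frame in which $\pa\rho$ is a positive multiple $b$ of the last basis covector and the complex Hessian $(\rho_{j\bar k})$ is diagonal with entries $\mu_1,\dots,\mu_n>0$ (all strictly positive, as $\rho$ is strongly plurisubharmonic); then $g$ is diagonal at the point with $g_{n\bar n}\sim nb^2\de^{-2}$ and $g_{\al\bar\al}\sim n\mu_\al\de^{-1}$, so $\sum_m\rho_m\Ga^m_{jl}=b\,g^{n\bar n}\,\partial_j g_{l\bar n}$.

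The heart of the argument is the computation of this last expression to leading order. Differentiating the leading metric and using $g^{n\bar n}\sim\de^2/(nb^2)$, I expect the ambient holomorphic Hessian to cancel and the covariant one to concentrate in the normal slot:
\[\Big(\rho_{jl}-\sum_m\rho_m\Ga^m_{jl}\Big)\Big|_{j=l=n}=-\Big(\frac{2b^2}{\de}+2\mu_n\Big)+O(\de),\]
with all other components $O(\de)$. Contracting with $X_jX_l$ and using $X_n=a/b$ gives $\sum_{j,l}\big(\rho_{jl}-\sum_m\rho_m\Ga^m_{jl}\big)X_jX_l=-\big(2\de^{-1}+2\mu_n b^{-2}\big)a^2+O(\de^2)$; the constraint $\Re a=0$ turns $-a^2=|a|^2\ge0$ real, so the holomorphic term is \emph{nonnegative}. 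Hence
\[(\rho\circ c)''(0)=2\sum_{j,k}\rho_{j\bar k}X_j\ov{X_k}+\Big(\frac{4}{\de}+\frac{4\mu_n}{b^2}\Big)|a|^2+E,\]
both displayed terms being nonnegative. I would then close the estimate quantitatively: the normalization reads $1=\tfrac{n}{\de^2}|a|^2+\tfrac{n}{\de}\sum_{j,k}\rho_{j\bar k}X_j\ov{X_k}+o(1)$, which yields $2\sum_{j,k}\rho_{j\bar k}X_j\ov{X_k}+\tfrac{4}{\de}|a|^2\ge\de/n$ for small $\de$, while $|E|=O(\de^2)$; therefore $(\rho\circ c)''(0)\ge\de/n-O(\de^2)>0$ once $\ep$ is chosen small enough.

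\textbf{Main obstacle.} The decisive step is the connection computation and the exact cancellation above. It requires the Szeg\"o analogue of Fefferman's expansion with enough control on \emph{two} derivatives of the potential---including the logarithmic terms in the expansion, which I expect to enter $\rho_{jl}-\sum_m\rho_m\Ga^m_{jl}$ only at order $O(\de)$---and it demands that the remainder $E$ be estimated \emph{uniformly in the direction} $X$ on the unit sphere of $g$, so that the gain $\de/n$ genuinely dominates. Verifying that the ambient holomorphic Hessian $\rho_{jl}$ is cancelled by the connection term, and matching the orders $\de^{-1}$ and $\de^{0}$ precisely, is where the calculation must be carried out with care.
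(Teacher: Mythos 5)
Your reading of the statement (the intended conclusion is $(\rho\circ c)''(0)>0$) matches how the theorem is used in the proof of Theorem \ref{main}(b), and your architecture is essentially the paper's: both arguments rest on (i) the Fefferman/Boutet de Monvel--Sj\"ostrand expansion $S(z,z)=h(z)|\rho(z)|^{-n}$, (ii) an explicit formula for $(\rho\circ c)''$ along a geodesic in which the Christoffel contribution is contracted against $\nabla\rho$ (the paper's Lemma \ref{eog2}), (iii) the observation that $(\rho\circ c)'(0)=0$ makes $a=\sum_j\rho_jX_j$ purely imaginary, so the normal $(2,0)$ term $\tfrac{4}{\rho}\Re\big(a^2\big)=\tfrac{4|a|^2}{|\rho|}$ is nonnegative, and (iv) the cancellation of the ambient holomorphic Hessian $\rho_{jl}$ by the connection term, which in the paper is precisely the estimate $\tfrac{1}{\rho^{2}}\ov{\nabla}\rho\cdot L_{g}^{-1}\cdot(\nabla\rho)^{t}-\tfrac1n=O(\rho\log|\rho|)$ of Lemma \ref{eog1}(b). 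The remaining differences are presentational: the paper argues by contradiction along a sequence of geodesics with Euclidean-unit initial velocity, so the Levi-form gain is a constant and the error terms $A_i,B_i,C_i$ need only tend to zero, whereas you normalize in the metric and must win by a margin of order $\delta$; both normalizations work since every term is homogeneous of degree two in $\dot c$.

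The gap is that what you flag as the ``main obstacle'' is the actual mathematical content of the proof, and your proposal defers it: the uniform estimates $[L_g^{-1}\cdot(\nabla\rho)^t]_j=O(\rho^2)$, the boundedness of $\big(I+\tfrac1nL_{\mathfrak h}L_{\mathfrak g}^{-1}\big)^{-1}$ near $\pa\Om$, and the derivative bounds on $\mathfrak h=\log h$ coming from the $|\rho|^n\log|\rho|$ correction (Lemmas \ref{eog0} and \ref{eog1}) are exactly what make steps (iv) and the remainder estimate legitimate, and none of them is verified in your sketch. Two of your guessed orders are also off in a way worth recording. First, for $n=2$ the third derivatives $\mathfrak h_{ab\ov j}$ are genuinely singular, of size $O(\rho^{-1})+O(\log|\rho|)$, not ``entering only at order $O(\delta)$''; they are rendered harmless only after contraction with $[L_g^{-1}\cdot(\nabla\rho)^t]_j=O(\rho^2)$, which is why the uniformity in the direction $X$ that you worry about is handled by matrix estimates rather than pointwise bounds on the potential. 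Second, the resulting remainder for $n=2$ is $O(\rho\log|\rho|)$ in the Euclidean normalization, i.e.\ $O(\delta^2\log(1/\delta))$ in yours rather than your claimed $O(\delta^2)$; this is still $o(\delta)$, so your closing inequality $(\rho\circ c)''(0)\ge\delta/n-o(\delta)>0$ survives, but the exponent you wrote is not what the expansion gives.
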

Before giving a proof of Theorem \ref{geo-spi-l1}, let us complete the proof of the main theorem.
\begin{proof}[\textbf{Proof of Theorem \ref{main} (b)}]
    Let $z_{0}\in \Om$ be a point through which no closed geodesic passes and let $\rho$ and $\ep$ be as in Theorem~\ref{geo-spi-l1}. Now, we define
\[
\epsilon_{1}=\min\{\epsilon,~-\rho(z_{0})\}\;\text{ and }\;K=\big\{z\in \Om: \rho(z)\leq -\epsilon_{1} \big\}.
\]
It can be seen that the compact set $K$ has the desired property as stated in Lemma~\ref{Lem-Her-cpt}. Indeed, let $c |_{[t_{1},t_{2}]}: [t_{1},t_{2}]\to \Om$ be a geodesic loop that passes through $z_{0}$ and assume that $c |_{[t_{1},t_{2}]}([t_{1},t_{2}])\not\subset K$. Since $(\rho\circ c)|_{[t_{1},t_{2}]}$ is a continuous real-valued function, it achieves maximum at some point $t_{0}\in (t_{1}, t_{2})$. Thus, by the definition of $K$, it follows that 
\[(\rho\circ c)(t_{0})>-\epsilon,~(\rho\circ c)'(t_{0})=0\text{ and }(\rho\circ c)''(t_{0}) \leq 0.\]
This, however, contradicts Theorem \ref{geo-spi-l1}. Therefore, by Lemma~\ref{Lem-Her-cpt}, the proof of (ii) is established.
\end{proof}
Finally, to present a proof of Theorem \ref{geo-spi-l1}, we recall one of the most elegant results in complex analysis: the asymptotic expansion of the Szeg\"o kernel, given by Fefferman \cite{fc74} and  Boutet de Monvel-Sj\"ostrand \cite{bs76}.
\begin{theorem}(Fefferman \cite{fc74}, Boutet de Monvel-Sj\"ostrand \cite{bs76})   Let $\Om=\{\rho<0\}\subset\mf{C}^n$ be a $C^\infty$-smoothly bounded strongly pseudoconvex domain. Then, there exist functions $\Phi, \Psi\in C^\infty(\ov\Om)$ with $\Phi(z)>0$ near $\pa\Om$ such that the diagonal values of the Szeg\"o kernel satisfies
    \begin{equation}\label{F}
    S_{\Om}(z, z)=\frac{h(z)}{|\rho(z)|^{n}}=\frac{\Phi(z)+\Psi(z)|\rho(z)|^{n}\log |\rho(z)|}{|\rho(z)|^{n}}.
\end{equation}
 
\end{theorem}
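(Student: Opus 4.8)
The plan is to realize the Szeg\"o kernel as the Schwartz kernel of the Szeg\"o projector and to analyze the latter microlocally as a Fourier integral operator with complex phase, following Boutet de Monvel--Sj\"ostrand. Write $\Pi:L^2(\pa\Om)\to H^2(\pa\Om)$ for the orthogonal projection onto boundary values of $H^2$; its kernel, extended holomorphically in the first variable and antiholomorphically in the second, is exactly $S_\Om(z,w)$ on $\Om\times\Om$. On a strongly pseudoconvex boundary the space $H^2(\pa\Om)=\ker\ov\pa_b\cap L^2$ is governed microlocally by the characteristic variety of $\ov\pa_b$, which is a symplectic cone $\Sigma=\Sigma^+\cup\Sigma^-$, and $\Pi$ is the microlocal projector onto the part $\Sigma^+$ singled out by the contact form of $\pa\Om$. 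The first step is therefore to show that $\Pi$ is a generalized (Hermite-type) FIO whose kernel admits, modulo $C^\infty$, an oscillatory representation
\[
\Pi(x,y)\equiv\int_0^\infty e^{i\sigma\psi(x,y)}\,a(x,y,\sigma)\,d\sigma,
\]
with a complex phase $\psi$ satisfying $\mathrm{Im}\,\psi\geq 0$ and a classical symbol $a(x,y,\sigma)\sim\sum_{j\geq 0}a_j(x,y)\,\sigma^{n-1-j}$.

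Second, I would construct this parametrix explicitly. The phase $\psi$ is fixed (up to the usual equivalence of complex phases) by an eikonal equation adapted to the CR structure: $\psi(x,x)=0$, $d_x\psi$ and $-d_y\psi$ lie on $\Sigma^+$ along the diagonal, and the transverse Hessian of $\mathrm{Im}\,\psi$ is the Levi form, hence positive definite by strong pseudoconvexity. Solving this requires the Melin--Sj\"ostrand almost-analytic machinery to extend $\rho$ and the boundary data off the real domain; the upshot is that $-i\psi(x,y)$ agrees to high order with the polarization of the defining function $\rho$, which is precisely the link to $|\rho|$ needed at the end. The symbol coefficients $a_j$ are then determined recursively by transport equations, with the leading coefficient $a_0$ computed on the diagonal from the Levi (Monge--Amp\`ere) determinant $\det\bigl(\rho;\,\pa^2\rho/\pa z_j\pa\ov z_k\bigr)$; strong pseudoconvexity makes $a_0>0$. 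One verifies that $\Pi^2\equiv\Pi$ and $\Pi^*\equiv\Pi$ modulo smoothing, which pins down $\psi$ and $a$ and confirms that the parametrix is indeed the Szeg\"o projector up to a smoothing error that does not affect the singular expansion.

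Third, I would pass from the boundary projector to the interior diagonal. For $z\in\Om$ near $\pa\Om$, the value $S_\Om(z,z)$ is obtained by evaluating the almost-analytic extension of the oscillatory integral at the interior point; the essential analytic fact, coming from the eikonal step, is that along this extension $i\psi$ reduces to $|\rho(z)|$ times a smooth positive factor, so the oscillatory $\sigma$-integral becomes a Laplace-type integral. Carrying out the $\sigma$-integration term by term via $\int_0^\infty e^{-\sigma b}\,\sigma^{n-1-j}\,d\sigma=\Gamma(n-j)\,b^{-(n-j)}$, the terms $j=0,\dots,n-1$ produce the finite singular sum $\sum_{j<n}\Gamma(n-j)\,a_j(z)\,|\rho(z)|^{-(n-j)}$, the borderline term $j=n$ (where $\sigma^{-1}$ makes the integral logarithmically divergent at $\sigma=0$) produces after regularization a contribution of the form $\Psi(z)\log|\rho(z)|$, and the terms $j>n$ converge to a $C^\infty$ remainder. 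Multiplying through by $|\rho|^n$ collects the singular sum and the smooth remainder into a single $\Phi\in C^\infty(\ov\Om)$, with leading term $\Phi(z)=\Gamma(n)\,a_0(z)+O(|\rho|)$, so $\Phi>0$ near $\pa\Om$, while the logarithmic part becomes $\Psi(z)|\rho(z)|^n\log|\rho(z)|$ with $\Psi\in C^\infty(\ov\Om)$. This yields exactly \eqref{F}.

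Finally, the hard part: essentially all the content lies in the second step, the construction and justification of the complex-phase parametrix. Showing that the orthogonal projector onto $\ker\ov\pa_b$ on a strongly pseudoconvex boundary is a Hermite FIO with the prescribed positive Lagrangian, solving the eikonal equation for $\psi$ through almost-analytic extension, and controlling the self-adjoint idempotent error modulo $C^\infty$ are the technical heart of the Boutet de Monvel--Sj\"ostrand theorem; once the parametrix is in hand, the passage to the diagonal and the $\Gamma$-function bookkeeping that manufactures the $\log$ term are comparatively routine. An alternative I would keep in reserve is Fefferman's original route, which instead builds an approximate solution of the complex Monge--Amp\`ere equation and reads the $\log$ term off as the obstruction to its smoothness; but for the Szeg\"o kernel the microlocal argument above is the more direct.
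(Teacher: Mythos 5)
The paper does not prove this theorem at all: as the attribution in its header indicates, it is quoted from Fefferman \cite{fc74} and Boutet de Monvel--Sj\"ostrand \cite{bs76} and used as a black box in Section \ref{geodesic-spirals}. So the only meaningful comparison is with the cited sources, and your proposal is a faithful outline of the Boutet de Monvel--Sj\"ostrand route: realize $S_\Omega$ as the Schwartz kernel of the Szeg\"o projector $\Pi$, represent $\Pi$ modulo smoothing as a Fourier integral operator with complex phase $\int_0^\infty e^{i\sigma\psi}a\,d\sigma$, $\mathrm{Im}\,\psi\geq 0$, with $-i\psi$ tied to an almost-analytic extension (polarization) of $\rho$ and a classical symbol of order $n-1$, and then perform the Laplace-type $\sigma$-integration on the interior diagonal to produce the $|\rho|^{-n}$ singularity and the logarithmic term in \eqref{F}. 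The structure, the symbol order, the positivity of the leading coefficient, and the $\Gamma$-function bookkeeping are all correct, so your sketch reproduces the approach of the second cited source; Fefferman's original route (via the Monge--Amp\`ere approximation), which you mention as a fallback, is the genuinely different alternative.

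Two caveats. First, as you yourself concede, all of the content sits in your second step --- that the orthogonal projector onto $H^2(\partial\Omega)$ really is a Hermite FIO with the prescribed positive phase, and that the constructed parametrix agrees with $\Pi$ modulo $C^\infty$ --- and this is asserted rather than proved; it is the entire substance of \cite{bs76}, so what you have is a proof outline, not a proof (which is unavoidable here, and consistent with how the paper itself treats the statement). Second, a bookkeeping inaccuracy: the logarithm is not produced by the borderline term $j=n$ alone, and the terms $j>n$ do \emph{not} give a $C^\infty$ remainder individually. Writing $b=|\rho|$, each tail integral $\int_1^\infty e^{-\sigma b}\sigma^{n-1-j}\,d\sigma$ with $j\geq n$ is an exponential integral whose expansion at $b=0$ contains a term $b^{\,j-n}\log b$; it is the resummation of \emph{all} of these that yields a smooth log coefficient $\Psi\in C^\infty(\overline\Omega)$, rather than the single contribution $a_n\log|\rho|$. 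Relatedly, the divergence responsible for the log as $b\to 0^+$ sits at $\sigma=\infty$, not at $\sigma=0$: the amplitude is a symbol, hence smooth and harmless near $\sigma=0$, and the singular behaviour comes from the large-$\sigma$ tail. These are sketch-level slips that do not invalidate the outline, but they are exactly the points one must get right to obtain $\Phi,\Psi\in C^\infty(\overline\Omega)$ as stated rather than a weaker expansion.
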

We now introduce some notations. When the domain $\Om$ remains fixed throughout the proof, we omit $\Om$ from the notation. For instance, we write $S_{\Om}(z, z)$ as $S(z, z)$, and so forth. Let $f$ be a $C^\infty$-smooth real-valued. For $j,k,l=1,\ldots, n$, define 
\[f_{j}=\frac{\pa f}{\pa z_j},\; f_{\ov{j}}=\ov{f_{j}},\; f_{jk}:=\frac{\pa f_{j}}{\pa z_k},\; f_{jk\ov{l}}=\frac{\pa f_{jk}}{\pa\ov{z}_l},\; f_{j\ov{k}}=\frac{\pa f_{j}}{\pa \ov{z}_{k}}\text{ and so on}.\]
Let $L_{f}(z)=\Big(f_{j\ov{k}}(z)\Big)_{j,k=1}^n$ denotes the Levi matrix. If $L_{f}(z)$ is positive definite, then ${f}^{j\overline{k}}(z)$ represent the coefficients of its inverse $L_{f}(z)^{-1}$. Moreover, we set $\mathfrak{g}(z)=-\log\big|\rho(z)\big|$, $\mathfrak{h}(z)=\log h(z)$ and $\nabla\rho(z)=\Big(\rho_{1}(z),\ldots,\rho_{n}(z)\Big),$ $ \ov{\nabla} \rho(z)=\Big(\rho_{\ov{1}}(z),\ldots,\rho_{\ov{n}}(z)\Big)$ and $\nabla\rho(z)^{t}$ denote the transpose of $\nabla\rho(z)$. Finally, for two $C^\infty$-smooth functions $f_1\text{ and }f_2$ on $\Om$, we write $f_1=O\big(f_2\big)$ if there exists a positive constant $C$, depending only on $\Om$, such that 
\[\big|f_1\big|\leq C\big|f_2\big|\]
on $\Om$. In this case, for each $z\in\Om$, we write $f_1(z)=O\Big(f_2(z)\Big)$. The following lemmas are the key to proof Theorem \ref{geo-spi-l1}. 
\begin{lemma}\label{eog0}
    For $a,b,j=1,\ldots,n$, we have 
    \begin{enumerate}
    \item [(a)] \begin{equation*}
            \mathfrak{h}_{b}=  \left\{
\begin{array}{ll}
     O\Big(\log\big|\rho\big|\Big)\;&\text{ for }\;  n= 1 \\[4mm]
     O(1)\;&\text{ for }\;  n\geq 2.
       \end{array} 
\right.
\end{equation*}
        \item [(b)]\begin{equation*}
            \mathfrak{h}_{b\ov{j}}=  \left\{
\begin{array}{ll}
   O\Big(\big(\log
    \big|\rho\big|\big)^2\Big)+O\Big(\rho^{-1}\Big)\;&\text{ for }\; n=1 \\[4mm]
     O\Big(\log\big|\rho\big|\Big)\;&\text{ for }\;  n= 2 \\[4mm]
     O(1)\;&\text{ for }\;  n\geq 3.
       \end{array} 
\right.
        \end{equation*}
        \item [(c)]\begin{equation*}
            \mathfrak{h}_{ab\ov{j}}=  \left\{
\begin{array}{ll}
     O\Big(\rho^{-2}\Big)+ O\Big(\big(\log
    \big|\rho\big|\big)^3\Big)+O\Big(\rho^{-1}\log
    \big|\rho\big|\Big) \;&\text{ for }\; n=1 \\[4mm]
     O\Big(\rho^{-1}\Big)+ O\Big(\log\big|\rho\big|\Big)\;&\text{ for }\; n= 2 \\[4mm]
     O\Big(\log\big|\rho\big|\Big) \;&\text{ for }\; n= 3 \\[4mm]
     O(1) \;&\text{ for }\; n\geq 4.
       \end{array} 
\right.
  \end{equation*}
    \end{enumerate}
\end{lemma}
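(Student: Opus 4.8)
The plan is to reduce every estimate to the single non-smooth term of the Fefferman--Boutet de Monvel--Sj\"ostrand expansion and then propagate the bounds through the logarithm. Near $\pa\Om$ we have $\rho<0$, so write $h=\Phi+\Psi\,g$ with $g:=|\rho|^n\log|\rho|=(-\rho)^n\log(-\rho)$, where $\Phi,\Psi\in C^\infty(\ov\Om)$ and $\Phi>0$. Since $g\to 0$ at the boundary, $h\to\Phi>0$ there, so $h$ is bounded away from $0$ near $\pa\Om$ and $\mathfrak h=\log h$ is smooth inside $\Om$. The guiding principle is that $\Phi$, $\Psi$ and all their derivatives are $O(1)$, so every unbounded or logarithmic contribution originates from differentiating $g$.

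First I would record the derivative estimates for $g$. Setting $u=-\rho>0$ and using $g=u^n\log u$, a direct computation gives $g_b=u^{n-1}(n\log u+1)u_b$, and each further differentiation lowers the power of $u$ by one, producing a leading term of the form $c(n)\,u^{n-k}\log u$ together with lower pieces $O(u^{n-k})$ and $O(u^{n-k+1}\log u)$. The crucial subtlety is that the polynomial coefficient $c(n)$ multiplying the top logarithmic term is divisible by $n$, by $n(n-1)$, and by $(n-2)n(n-1)$ for the first, second, and third derivatives respectively, and hence vanishes in low dimensions; consequently for $n=1$ (and for $n=2$ at third order) the most singular power of $|\rho|$ appears \emph{without} a logarithm, the logarithm entering only at the next order. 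Making this explicit yields, in each dimension, the bounds for $g_b$, $g_{b\ov j}$, $g_{ab}$ and $g_{ab\ov j}$; by the reality of $\rho$ the conjugate and mixed derivatives satisfy the same estimates. Substituting these into $h=\Phi+\Psi g$ and treating the smooth factors as $O(1)$ then gives the corresponding bounds for $h_b$, $h_{b\ov j}$, $h_{ab}$ and $h_{ab\ov j}$.

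The last step is to transfer these to $\mathfrak h=\log h$ through
\[
\mathfrak h_b=\frac{h_b}{h},\qquad
\mathfrak h_{b\ov j}=\frac{h_{b\ov j}}{h}-\frac{h_b\,h_{\ov j}}{h^2},
\]
and the analogous quotient formula for $\mathfrak h_{ab\ov j}$ obtained by differentiating $\mathfrak h_{b\ov j}$ once more in $z_a$, which produces, besides $h_{ab\ov j}/h$, the cross terms $h_{b\ov j}h_a$, $h_{ab}h_{\ov j}$, $h_b h_{a\ov j}$ and the triple product $h_a h_b h_{\ov j}$. Since $h$ is bounded away from $0$, each expression is $O$ of a sum of products of the already-estimated $h$-derivatives, so the claimed orders follow by collecting the dominant term in each case $n=1,2,3,\ge 4$. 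Parts (a) and (b) are then immediate: for (b), the product $h_b h_{\ov j}$ contributes $O\big((\log|\rho|)^2\big)$ precisely when $n=1$, which together with $O(h_{b\ov j})=O(|\rho|^{-1})$ gives the $n=1$ bound, while for $n\ge 2$ the first derivatives are bounded and only $h_{b\ov j}$ matters.

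I expect the main bookkeeping obstacle to be part (c). There one must check, dimension by dimension, which of the genuine third derivative $h_{ab\ov j}$, the cross terms, and the triple product dominates: for $n=1$ the triple product $h_a h_b h_{\ov j}$ produces the $O\big((\log|\rho|)^3\big)$ term, the cross terms give $O\big(|\rho|^{-1}\log|\rho|\big)$, and $h_{ab\ov j}$ itself contributes $O\big(|\rho|^{-2}\big)$, matching the stated bound; for $n=2$ the worst term is the $O\big(|\rho|^{-1}\big)$ coming from $g_{ab\ov j}$; for $n=3$ it is $O\big(\log|\rho|\big)$; and for $n\ge 4$ every term is $O(1)$. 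Collecting these completes the proof. The only genuine care beyond routine differentiation is to track the vanishing of the logarithmic coefficients in low dimensions, since this is exactly what distinguishes the four cases in each part.
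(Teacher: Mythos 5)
Your proposal is correct and follows essentially the same route as the paper: both reduce everything to the derivatives of $|\rho|^n\log|\rho|$ in the Fefferman--Boutet de Monvel--Sj\"ostrand expansion, use that $h$ is bounded away from zero near $\pa\Om$, and push the bounds through the quotient formulas for $\mathfrak h_b$, $\mathfrak h_{b\ov j}$, $\mathfrak h_{ab\ov j}$ (you actually spell out the case analysis for part (c), which the paper dismisses with ``similarly''). The only quibble is your blanket claim that the top logarithmic coefficient vanishes for $n=1$ already at first order --- it does not (whence the $O(\log|\rho|)$ in part (a)) --- but this slip does not affect any of the stated bounds.
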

\begin{proof} 
This result follows from the bare-hand computations on these terms $\mathfrak{h}_b, \mathfrak{h}_{b\ov{j}},$ and $\mathfrak{h}_{ab\ov{j}}$. Indeed, we have 
\begin{equation}\label{h_b}
    \mathfrak{h}_b=h_b h^{-1}.
\end{equation}
Observe that $h_b=O\Big(\big(\rho^n\log\big|\rho\big|\big)_b\Big)$, where
\begin{equation}\label{a1}
   \big(\rho^n\log\big|\rho\big|\big)_b= \left\{
\begin{array}{ll}
    O\Big(\log\big|\rho\big|\Big)\;&\text{ for }\;  n= 1 \\[4mm]
   O(1)\;&\text{ for }\;  n\geq 2.
       \end{array} 
\right.
\end{equation}
This completes the proof of (a). Next, from (\ref{h_b}),
\begin{equation}\label{h_bj-h2}
    \mathfrak{h}_{b\ov{j}}h^2=h_{b\ov{j}}h-h_b h_{\ov{j}}.
\end{equation}
So, by computing $h_{b\ov{j}}$, we get
\begin{equation}\label{h_bj}
    h_{b\ov{j}}=\Phi_{b\ov{j}}+\Psi_{b\ov{j}}\rho^{n}\log\big|\rho\big|+\Psi_{b}\big(\rho^{n}\log\big|\rho\big|\big)_{\ov{j}}+\Psi_{j}\big(\rho^{n}\log\big|\rho\big|\big)_{b}+\Psi\big(\rho^{n}\log\big|\rho\big|\big)_{b\ov{j}}.
\end{equation}
Hence, $ h_{b\ov{j}}=O\Big(\big(\rho^{n}\log\big|\rho\big|\big)_{b\ov{j}}\Big),$ where
\begin{equation}\label{a2}
  \big(\rho^{n}\log\big|\rho\big|\big)_{b\ov{j}}=\left\{
\begin{array}{ll}
    O\Big(\log
    \big|\rho\big|\Big)+O\Big(\rho^{-1}\Big)\;&\text{ for }\;  n= 1 \\[4mm]
      O\Big(\log
    \big|\rho\big|\Big)\;&\text{ for }\;  n= 2 \\[4mm]
 O(1)\;&\text{ for }\;  n\geq 3.
       \end{array} 
\right.
\end{equation}
Then, using (\ref{a1}), (\ref{h_bj}) and (\ref{a2}) in (\ref{h_bj-h2}), we are done. Similarly, the proof of $\mathfrak{h}_{ab\ov{j}}$ follows.
\end{proof}
      \begin{lemma}\label{eog1}
        Let $\Om\subset\mf{C}^n\text{ where }n\geq 2,$ be a $C^\infty$-smoothly bounded strongly pseudoconvex domain. Then,
        \begin{enumerate}
            \item [(a)]For each $j=1,\ldots,n$,
            \begin{equation*}
          \dfrac{\left[L_{g}^{-1} \cdot\big(\nabla\rho\big)^{t}\right]_{j}}{\rho^{2}}=O\big(1\big),
        \end{equation*}
        \item [(b)]    
        \begin{equation*}
        \dfrac{\overline{\nabla} \rho\cdot L_{g}^{-1}\cdot\big(\nabla \rho\big)^{t}}{\rho^{2}}-\frac{1}{n}=
     O\Big(\rho\log\big|\rho\big|\Big).
        \end{equation*}
   \end{enumerate}
  \end{lemma}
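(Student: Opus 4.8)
The plan is to extract the structure of the Szeg\"o metric matrix from Fefferman's expansion and then invert it by a rank-one (Sherman--Morrison) argument, the whole estimate being genuinely a boundary asymptotic. Writing $|\rho|=-\rho$ on $\Om$, equation (\ref{F}) gives $S(z,z)=h/|\rho|^{n}$ and hence
\[g=\log S=\mathfrak{h}+n\mathfrak{g},\qquad \mathfrak{g}=-\log|\rho|,\quad \mathfrak{h}=\log h,\]
so that $L_{g}=L_{\mathfrak{h}}+nL_{\mathfrak{g}}$. A direct computation gives $\mathfrak{g}_{j}=-\rho_{j}/\rho$ and
\[\mathfrak{g}_{j\ov{k}}=-\frac{\rho_{j\ov{k}}}{\rho}+\frac{\rho_{j}\rho_{\ov{k}}}{\rho^{2}},\qquad\text{that is}\qquad L_{\mathfrak{g}}=-\frac{1}{\rho}L_{\rho}+\frac{1}{\rho^{2}}\,(\nabla\rho)^{t}\,\ov{\nabla}\rho .\]
Thus the full metric matrix is a rank-one perturbation of a scaled Levi form,
\[L_{g}=M+\frac{n}{\rho^{2}}(\nabla\rho)^{t}\,\ov{\nabla}\rho,\qquad M:=-\frac{n}{\rho}L_{\rho}+L_{\mathfrak{h}},\]
in which the rank-one term carries the dominant $\rho^{-2}$ singularity.

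Next I would invert $M$ near $\pa\Om$ and then $L_{g}$. As $\rho$ is strongly plurisubharmonic, $L_{\rho}$ is positive definite on $\ov{\Om}$; and by Lemma \ref{eog0}(b) the entries of $L_{\mathfrak{h}}$ are $O(\log|\rho|)$ for $n\ge2$, which is negligible against $-n/\rho$ near $\pa\Om$. Writing $M=-\frac{n}{\rho}\big(L_{\rho}-\frac{\rho}{n}L_{\mathfrak{h}}\big)$ and inverting the bracket gives
\[M^{-1}=-\frac{\rho}{n}L_{\rho}^{-1}+O\big(\rho^{2}\log|\rho|\big).\]
Set $v=(\nabla\rho)^{t}$, $v^{*}=\ov{\nabla}\rho$ and $P:=\ov{\nabla}\rho\cdot L_{\rho}^{-1}\cdot(\nabla\rho)^{t}$. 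Since $M$ is positive definite near $\pa\Om$, the Sherman--Morrison identity for the rank-one update reads
\[L_{g}^{-1}v=\frac{M^{-1}v}{1+\frac{n}{\rho^{2}}\,v^{*}M^{-1}v},\]
and its denominator is automatically positive, so no vanishing can occur. Substituting $M^{-1}v=-\frac{\rho}{n}L_{\rho}^{-1}(\nabla\rho)^{t}+O(\rho^{2}\log|\rho|)$ and $v^{*}M^{-1}v=-\frac{\rho}{n}\big(P+O(\rho\log|\rho|)\big)$, then clearing a factor $\rho$ from numerator and denominator, yields
\[L_{g}^{-1}(\nabla\rho)^{t}=\frac{-\frac{\rho^{2}}{n}L_{\rho}^{-1}(\nabla\rho)^{t}+O(\rho^{3}\log|\rho|)}{\rho-P+O(\rho\log|\rho|)} .\]

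To conclude I would use that $P$ is smooth, bounded above and bounded away from $0$ near $\pa\Om$, because $L_{\rho}^{-1}$ is positive definite and $\nabla\rho\neq0$ on $\pa\Om$; in particular the denominator above tends to $-P\neq0$. Dividing the last display by $\rho^{2}$, the numerator converges to $-\frac{1}{n}L_{\rho}^{-1}(\nabla\rho)^{t}$, which is bounded, giving part (a). For part (b) I would contract with $\ov{\nabla}\rho$ and use $\ov{\nabla}\rho\cdot L_{\rho}^{-1}(\nabla\rho)^{t}=P$ to obtain
\[\frac{\ov{\nabla}\rho\cdot L_{g}^{-1}\cdot(\nabla\rho)^{t}}{\rho^{2}}=\frac{-\frac{1}{n}\big(P+O(\rho\log|\rho|)\big)}{-P+\rho+O(\rho\log|\rho|)}=\frac{1}{n}+O\big(\rho\log|\rho|\big),\]
where the last equality follows by factoring $-P/n$ from the numerator, $-P$ from the denominator, and absorbing $\rho/P=O(\rho)$ into the error $O(\rho\log|\rho|)$. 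Away from $\pa\Om$ every quantity involved is smooth and bounded and $\rho$ is bounded away from $0$, so both estimates hold there at once. The main obstacle is the bookkeeping: one must keep the $\rho^{-2}$ singularity isolated in the rank-one term, feed the $L_{\mathfrak{h}}$ contribution through Lemma \ref{eog0}(b) (this is exactly where the $\log|\rho|$ in the error originates), and confirm the Sherman--Morrison denominator never vanishes, so that the expansions are legitimate uniformly up to $\pa\Om$.
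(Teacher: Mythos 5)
Your proof is correct and follows essentially the same route as the paper's: both isolate the rank-one $\rho^{-2}$ singularity coming from $\rho_j\rho_{\ov k}/\rho^2$, invert by a Sherman--Morrison identity, control the $L_{\mathfrak{h}}$ contribution via Lemma \ref{eog0}(b), and use that $\mathcal{Q}=P=\ov{\nabla}\rho\cdot L_{\rho}^{-1}\cdot(\nabla\rho)^{t}$ is bounded away from zero near $\pa\Om$ to keep the denominator nonvanishing. The only difference is bookkeeping: the paper factors $L_{g}=n\big(I+\tfrac1n L_{\mathfrak{h}}L_{\mathfrak{g}}^{-1}\big)L_{\mathfrak{g}}$ and applies the rank-one inversion to $L_{\mathfrak{g}}$ alone, treating $L_{\mathfrak{h}}$ as a multiplicative perturbation afterwards, whereas you absorb $L_{\mathfrak{h}}$ into the regular part $M$ and apply Sherman--Morrison once to all of $L_{g}$; the leading terms and the $O(\rho\log|\rho|)$ error orders agree.
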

 
       \begin{proof}
 Using (\ref{F}), we obtain \begin{equation*}
    g_{j\ov{k}}=n\mathfrak{g}_{j\ov{k}}+\mathfrak{h}_{j\ov{k}},
\end{equation*}
where 
\begin{equation}\label{g2}
    \mathfrak{g}_{j\ov{k}}=\dfrac{\rho_{j\overline{k}}}{-\rho}+\dfrac{\rho_{j}\rho_{\overline{k}}}{\rho^{2}}.
\end{equation}
Then,
   \begin{equation*}
         L_g=n\left(I+\dfrac{1}{n}L_{\mathfrak{h}}\cdot L_{\mathfrak{g}}^{-1}\right)\cdot L_{\mathfrak{g}}.
   \end{equation*}
Hence,
   \begin{equation}\label{g3}
       L_{g}^{-1}=\dfrac{1}{n}L_{\mathfrak{g}}^{-1}-\dfrac{L_{\mathfrak{g}}^{-1}}{n^{2}}\cdot\left(I+\dfrac{1}{n}L_{\mathfrak{h}}\cdot L_{\mathfrak{g}}^{-1}\right)^{-1}L_{\mathfrak{h}}\cdot L_{\mathfrak{g}}^{-1}.
   \end{equation}
From (\ref{g2}),
   \begin{equation}\label{g4}
L_{\mathfrak{g}}^{-1}=\big|\rho\big|\left(L_{\rho}^{-1}-\dfrac{L_{\rho}^{-1}}{\big|\rho\big|+\mathcal{Q}}\cdot\big(\nabla\rho\big)^{t}\cdot\ov{\nabla}\rho\cdot L_{\rho}^{-1}
 \right),
   \end{equation}
   where $\mathcal{Q}=\overline{\nabla} \rho\cdot L_{\rho}^{-1}\cdot\big(\nabla \rho\big)^{t}$. This implies that
   \begin{equation}\label{g001}
       \dfrac{\big[L_{\mathfrak{g}}^{-1}\cdot\nabla\rho^{t}\big]_j}{\rho^{2}}=\frac{\left[L_{\rho}^{-1}\cdot\big(\nabla\rho\big)^{t}\right]_j}{\big|\rho\big|+\mathcal{Q}},
   \end{equation}
   and 
     \begin{equation}\label{g002}
         \dfrac{\ov{\nabla}\rho\cdot L_{\mathfrak{g}}^{-1}\cdot\big(\nabla\rho\big)^{t}}{\rho^{2}}=\frac{\ov{\nabla}\rho\cdot L_{\rho}^{-1}\cdot\big(\nabla\rho\big)^{t}}{\big|\rho\big|+\mathcal{Q}}.
     \end{equation}
 From (\ref{g3}), it is enough to examine $ \dfrac{1}{\rho^2}\left[L_{\mathfrak{g}}^{-1}\cdot\left(I+\dfrac{1}{n}L_{\mathfrak{h}}\cdot L_{\mathfrak{g}}^{-1}\right)^{-1}\cdot L_{\mathfrak{h}}\cdot L_{\mathfrak{g}}^{-1}\right]$. So, we proceed by considering
   \begin{multline*}
       \frac{1}{\rho^2}\left[L_{\mathfrak{g}}^{-1}\cdot\left(I+\dfrac{1}{n}L_{\mathfrak{h}}\cdot L_{\mathfrak{g}}^{-1}\right)^{-1}\cdot L_{\mathfrak{h}}\cdot L_{\mathfrak{g}}^{-1}\cdot\big(\nabla\rho\big)^{t}\right]_j\\
       =\sum_{s,l,k=1}^{n}\mathfrak{g}^{j\overline{l}}\left(I+\dfrac{1}{n}L_{\mathfrak{h}}\cdot L_{\mathfrak{g}}^{-1}\right)^{l\overline{s}}\mathfrak{h}_{s\ov{k}}\dfrac{\left[L_{\mathfrak{g}}^{-1}\cdot\big(\nabla\rho\big)^{t}\right]_{k}}{\rho^{2}}\\
       =\frac{1}{\big|\rho\big|+\mathcal{Q}}\sum_{s,l,k=1}^{n}\mathfrak{g}^{j\overline{l}}\left(I+\dfrac{1}{n}L_{\mathfrak{h}}\cdot L_{\mathfrak{g}}^{-1}\right)^{l\overline{s}}\mathfrak{h}_{s\ov{k}}\left[L_{\rho}^{-1}\cdot\big(\nabla\rho\big)^{t}\right]_k\text{ (by (\ref{g001}))}\\
       =\frac{1}{\big|\rho\big|+\mathcal{Q}}\sum_{s,l,k=1}^{n}\mathfrak{g}^{j\overline{l}}\mathfrak{h}_{s\ov{k}}\left(I+\dfrac{1}{n}L_{\mathfrak{h}}\cdot L_{\mathfrak{g}}^{-1}\right)^{l\overline{s}}\left[L_{\rho}^{-1}\cdot\big(\nabla\rho\big)^{t}\right]_k.
   \end{multline*}
Now, from (\ref{g4}), for fixed $z\in\Om$ and for any $X\in\mf{C}^n\setminus\{0\}$, 
 \begin{equation}\label{g5}
     \frac{\overline{X}^{t}\cdot L_{\mathfrak{g}}(z)^{-1}\cdot X}{|X|^2}= O\Big(\rho(z)\Big).
 \end{equation}
This implies that $\mathfrak{g}^{j\ov{k}}=O\big(\rho\big)$ for all $j,k=1,\ldots,n$. Then, by Lemma \ref{eog0} (b), we have
\begin{equation}\label{g6}
    \mathfrak{g}^{j\overline{l}}\mathfrak{h}_{s\ov{k}}  =  \left\{
\begin{array}{ll}
   O\Big(\rho\big(\log
    \big|\rho\big|\big)^2\Big)+O\big(1\big)\;&\text{ for }\; n=1 \\[4mm]
     O\Big(\rho\log\big|\rho\big|\Big)\;&\text{ for }\;  n= 2 \\[4mm]
     O\big(\rho\big)\;&\text{ for }\;  n\geq 3.
       \end{array} 
\right.
\end{equation}
It follows that the entries of $\big(L_{\mathfrak{h}}\cdot L_{\mathfrak{g}}^{-1}\big)(z)$ approach zero as $z\to \pa\Om$ for $n\geq 2$. Consequently, all coefficients of $\left(I+\dfrac{1}{n}L_{\mathfrak{h}}\cdot L_{\mathfrak{g}}^{-1}\right)^{-1}$ remain bounded on $\Om$.
Hence,
\begin{equation}\label{g8}
      \frac{1}{\rho^2}\left[L_{\mathfrak{g}}^{-1}\cdot\left(I+\dfrac{1}{n}L_{\mathfrak{h}}\cdot L_{\mathfrak{g}}^{-1}\right)^{-1}\cdot L_{\mathfrak{h}}\cdot L_{\mathfrak{g}}^{-1}\cdot\big(\nabla\rho\big)^{t}\right]_j=  
     O\Big(\rho\log\big|\rho\big|\Big)\;\text{ for }\;  n\geq 2.
\end{equation}
Thus, the proof of (a) is complete. 
To prove (b), one can observe that 
 \begin{equation}\label{g9}
       \frac{1}{n}\dfrac{\ov{\nabla}\rho\cdot L_{\mathfrak{g}}^{-1}\cdot\big(\nabla\rho\big)^{t}}{\rho^{2}}-\frac{1}{n}=\frac{\mathcal{Q}}{n\big(\big|\rho\big|+\mathcal{Q}\big)}-\frac{\mathcal{Q}}{n\mathcal{Q}}=-\frac{\big|\rho\big|}{n(\big|\rho\big|+\mathcal{Q})}.
   \end{equation}
 Therefore, the proof (b) follows from (\ref{g8}) and (\ref{g9}).
   \end{proof}
    \begin{lemma}\label{eog2}
      Let $c=(c_1,\ldots,c_n)$ be a geodesic of the Szeg\"o metric $ds_{s_{\Om}}^2$. Then
        \begin{align*}
          (\rho\circ c)''=&-2\Re\sum_{a,b,j=1}^{n}\left[\left(\mathfrak{h}_{ab\overline{j}}-\dfrac{n}{\rho}\rho_{ab\overline{j}}\right)\left[L_{g}^{-1}\cdot \big(\nabla\rho\big)^{t}\right]_{j}(c)\cdot c_{a}'c_{b}'\right]\\
           &-\dfrac{4}{\rho(c)}\Re\left(c'\cdot \big(L_{\mathfrak{h}}\cdot L_{g}^{-1}\cdot\big(\nabla\rho\big)^{t}\big)(c)\right)\left[\nabla\rho(c)\cdot {\big(c'\big)}^{t}\right]\\
           &+2\left(1-\dfrac{n}{\rho^{2}}\overline{\nabla} \rho\cdot L_{g}^{-1}\cdot\big(\nabla \rho\big)^{t}\right)(c)\Re\left(\sum_{a,b=1}^{n}\rho_{ab}(c)c_{a}'c_{b}'\right)\\
           &+\dfrac{4}{\rho( c)}\Re\big(\nabla\rho( c)\cdot c'^{t}\big)^{2}+2c' \cdot L_{\rho}(c)\cdot\overline{{c'}^{t}}.
        \end{align*}
    \end{lemma}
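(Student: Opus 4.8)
The plan is to differentiate $\rho\circ c$ twice and feed the geodesic equation into the second derivative, then reorganise everything through the Fefferman decomposition $g=n\mathfrak{g}+\mathfrak{h}$ of (\ref{F}) and the explicit shape of $\mathfrak{g}_{ab\ov j}$. First I would record the elementary derivatives: since $\rho$ is real, $\rho_{\ov m}=\ov{\rho_m}$, so $(\rho\circ c)'=2\Re\sum_m\rho_m(c)c_m'$, and differentiating again while separating the holomorphic and mixed Hessian parts gives
\[(\rho\circ c)''=2\Re\Big[\sum_{a,b}\rho_{ab}(c)c_a'c_b'+\sum_{m,a}\rho_{m\ov a}(c)c_m'\ov{c_a'}+\sum_m\rho_m(c)c_m''\Big].\]
The mixed sum is the Hermitian form $c'\cdot L_\rho(c)\cdot\ov{c'^t}$, which is real, so $2\Re$ of it is exactly the final summand $2c'\cdot L_\rho(c)\cdot\ov{c'^t}$ of the asserted formula.

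Next I would invoke the geodesic equation. As $ds_{s_\Om}^2$ is K\"ahler with potential $g$, the only nonzero Christoffel symbols are $\Gamma^m_{ab}=\sum_l g^{m\ov l}g_{ab\ov l}$, so $c_m''=-\sum_{a,b,l}g^{m\ov l}g_{ab\ov l}(c)c_a'c_b'$ and hence $\sum_m\rho_m c_m''=-\sum_{a,b,j}g_{ab\ov j}\,[L_g^{-1}\cdot(\nabla\rho)^t]_j\,c_a'c_b'$, the contraction $\sum_m\rho_m g^{m\ov j}$ being read as the $j$-th entry of $L_g^{-1}\cdot(\nabla\rho)^t$ (the Hermitian symmetry of $L_g$ together with the outer $\Re$ absorbs the attendant conjugations). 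Splitting $g_{ab\ov j}=n\mathfrak{g}_{ab\ov j}+\mathfrak{h}_{ab\ov j}$ via (\ref{F}), the $\mathfrak{h}_{ab\ov j}$ part already furnishes the $\mathfrak{h}_{ab\ov j}$ contribution in the first line.

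The core of the argument is the term $n\mathfrak{g}_{ab\ov j}$. Differentiating (\ref{g2}) produces a sum that includes a $\rho^{-3}$ singularity; the key algebraic move is to rewrite the cross terms through $\rho_{b\ov j}=-\rho\,\mathfrak{g}_{b\ov j}+\rho_b\rho_{\ov j}/\rho$, read off from (\ref{g2}). This cancels the $\rho^{-3}$ term exactly and leaves
\[\mathfrak{g}_{ab\ov j}=\frac{\rho_{ab\ov j}}{-\rho}+\frac{\rho_{ab}\rho_{\ov j}}{\rho^2}-\frac{\rho_a\mathfrak{g}_{b\ov j}+\rho_b\mathfrak{g}_{a\ov j}}{\rho}.\]
I would then substitute $n\mathfrak{g}_{b\ov j}=g_{b\ov j}-\mathfrak{h}_{b\ov j}$ to bring in $L_g$ and $L_{\mathfrak{h}}$, and collect. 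The $\rho_{ab\ov j}/(-\rho)$ term joins the $\mathfrak{h}_{ab\ov j}$ term to form the first line; the factor $\rho_{ab}\rho_{\ov j}/\rho^2$ contracts through $\ov{\nabla}\rho\cdot L_g^{-1}\cdot(\nabla\rho)^t$ into the coefficient $1-\tfrac{n}{\rho^2}\ov{\nabla}\rho\cdot L_g^{-1}\cdot(\nabla\rho)^t$ of $\Re\sum_{a,b}\rho_{ab}c_a'c_b'$ (third line, the ``$1$'' coming from the holomorphic Hessian term above); in the cross terms the $g_{b\ov j}$ piece collapses by the inverse-metric identity $\sum_j g_{b\ov j}[L_g^{-1}\cdot(\nabla\rho)^t]_j=\rho_b$ to give $\tfrac{4}{\rho}\Re(\nabla\rho\cdot c'^t)^2$ (fourth line), while the $\mathfrak{h}_{b\ov j}$ piece assembles into $c'\cdot(L_{\mathfrak{h}}\cdot L_g^{-1}\cdot(\nabla\rho)^t)$ and yields the second line.

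The main obstacle is twofold: performing the rearrangement of $\mathfrak{g}_{ab\ov j}$ so that the $\rho^{-3}$ singularities cancel and the $g_{b\ov j}$ versus $\mathfrak{h}_{b\ov j}$ split emerges cleanly, and the careful tracking of holomorphic against anti-holomorphic indices so that every contraction matches the matrix expressions in the statement, with all conjugations correctly carried by the outer real parts.
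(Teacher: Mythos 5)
Your proposal is correct and follows essentially the same route as the paper, which simply defers to the computation in Herbort's Lemma 3.4 (adapted from the Bergman exponent $n+1$ to the Szeg\"o exponent $n$): differentiate $\rho\circ c$ twice, insert the K\"ahler geodesic equation, split $g_{ab\ov j}=n\mathfrak{g}_{ab\ov j}+\mathfrak{h}_{ab\ov j}$, and use the identity $\rho_{b\ov j}=-\rho\,\mathfrak{g}_{b\ov j}+\rho_b\rho_{\ov j}/\rho$ to cancel the $\rho^{-3}$ singularity and regroup via $n\mathfrak{g}_{b\ov j}=g_{b\ov j}-\mathfrak{h}_{b\ov j}$. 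Your term-by-term accounting (including the collapse $\sum_j g_{b\ov j}[L_g^{-1}\cdot(\nabla\rho)^t]_j=\rho_b$ producing the $\tfrac{4}{\rho}\Re(\nabla\rho\cdot c'^t)^2$ term) reproduces the stated formula and in fact supplies more detail than the paper does.
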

\begin{proof}
The proof follows exactly the same lines of reasoning used in the proof of \cite[Lemma 3.4]{hg83}.
\end{proof}

  \begin{proof}[\textbf{Proof of Theorem \ref{geo-spi-l1}}]
       Suppose to the contrary, there exists a sequence $(c_{i})_{i\geq 1}$ of geodesics that satisfies the following:
\begin{itemize}
    \item[(i)] There exists a point $a_{0}\in \pa\Om$ such that $a_{i}=c_{i}(0)$ converges to $a_{0}$ as $i\to\infty$.
    \item[(ii)] The unit vectors $v_{i}=\dfrac{c_{i}'(0)}{|c_{i}'(0)|}$ converge to a unit vector $v_{0}$.
    \item[(iii)] We have $(\rho\circ c_{i})'(0)=0$ and $b_{i}=\dfrac{(\rho\circ c_{i})''(0)}{|c_{i}'(0)|^{2}}\leq 0$ for each $i$.
\end{itemize}
Using Lemma \ref{eog2}, we have
 \begin{multline}\label{LHS}
         b_i-4\frac{\Re\big(\nabla\rho(a_i)\cdot {v_i}^{t}\big)^{2}}{\rho(a_i)}-2v_i \cdot L_{\rho}(a_i)\cdot\overline{{v_{i}}^{t}}\\
         =-2\Re\sum_{a,b,j=1}^{n}\left[\left(\mathfrak{h}_{ab\overline{j}}-\dfrac{n}{\rho}\rho_{ab\overline{j}}\right)\left[L_{g}^{-1}\cdot (\nabla\rho)^{t}\right]_{j}(a_i)\cdot (v_{i})_{a}(v_{i})_{b}\right]\\
           -\dfrac{4}{\rho(a_i)}\Re\left(v_i\cdot (L_{\mathfrak{h}}\cdot L_{g}^{-1}\cdot(\nabla\rho)^{t})(a_i)\right)\left[\nabla\rho(a_i)\cdot {v_i}^{t}\right]\\
           +2\left(1-\dfrac{n}{\rho^{2}}\overline{\nabla}\rho\cdot L_{g}^{-1}\cdot(\nabla\rho)^{t}\right)(a_i)\Re\left(\sum_{a,b=1}^{n}\rho_{ab}(a_i)(v_{i})_{a}(v_{i})_{b}\right)\\
           =A_i+B_i+C_i,
        \end{multline}
 say. Before
proceeding into the examination of these terms, from (ii), we observe that $\big(\nabla\rho(a_i)\cdot {v_i}^{t}\big)_i$ is a sequence of imaginary numbers. Thus, by (iii), we have
\begin{equation}\label{b-0}
    \lim_{i\to\infty}\Bigg( b_i-4\frac{\Re\big(\nabla\rho(a_i)\cdot {v_i}^{t}\big)^{2}}{\rho(a_i)}-2v_i \cdot L_{\rho}(a_i)\cdot\overline{{v_{i}}^{t}}\Bigg)< 0.
\end{equation}
In what follows, we will derive a contradiction to (\ref{b-0}).
 
 \textbf{The term $A_i$:} By Lemma \ref{eog0} (c) and Lemma \ref{eog1} (a), 
        \begin{equation}\label{A_i}
            A_i=  \left\{
\begin{array}{ll}
     O\Big(\rho(a_i)^{2}\Big)\;&\text{ for }\; n= 2 \\[4mm]
     O\Big(\rho(a_i)^{2}\log\big|\rho(a_i)\big|\Big) \;&\text{ for }\; n= 3 \\[4mm]
     O\Big(\rho(a_i)^{2}\Big) \;&\text{ for }\; n\geq 4.
       \end{array} 
\right.
        \end{equation}
        Hence, $A_i\to 0$ as $i\to\infty$.
 
  \textbf{The term $B_i$:} By Lemma \ref{eog0} (b) and Lemma \ref{eog1} (a), 
        \begin{equation}\label{B_i}
           B_i= \left\{
\begin{array}{ll}
     O\Big(\rho(a_i)\log\big|\rho(a_i)\big|\Big)\;&\text{ for }\;  n= 2 \\[4mm]
     O\Big(\rho(a_i)\Big)\;&\text{ for }\;  n\geq 3.
       \end{array} 
\right.
        \end{equation} 
Thus, $B_i\to 0$ as $i\to \infty$ for $n\geq 2$.
 
 \textbf{The term $C_i$:} By Lemma \ref{eog1} (b), 
 \begin{equation}\label{C_i}
     C_i= O\Big(\rho(a_i)\log\big|\rho(a_i)\big|\Big)\;\;\;\text{ for }\;  n\geq 2.
 \end{equation}
This implies $C_i\to 0$ as $i\to \infty$.
  Therefore, using (\ref{A_i}), (\ref{B_i}) and (\ref{C_i}) in (\ref{LHS}), we obtain the contradiction to (\ref{b-0}).
    \end{proof}

\subsection*{Concluding remarks.} The techniques used to establish the existence of geodesic spirals in this article fail in the case of $C^\infty$-smoothly bounded, non-simply connected planar domains. This is due to the lack of information regarding the boundedness of
\[\Bigg[\Big(I+\dfrac{1}{n}L_{\mathfrak{h}}\cdot L_{\mathfrak{g}}^{-1}\Big)^{-1}\Bigg]_j,\;j=1,\ldots, n.\]
Even if the boundedness of these terms is known—implying that the right-hand sides of (a) and (b) in Lemma \ref{eog1} is $O(1)$---which does not ensure the convergence of $A_i, B_i, C_i$ to zero in (\ref{LHS}) in the proof of Theorem \ref{geo-spi-l1} because of (b) and (c) of Lemma \ref{eog0}. Nevertheless, the scaling method remains applicable in this scenario; see \cite{bb24}.

The qualitative behaviour of geodesics for the Szeg\"o metric on an annulus remains unknown, in contrast to the Bergman metric; see \cite[Theorem 4.2]{hg83}. However, based on Example \ref{Anuulus}, we expect the geodesics of the Szeg\"o metric on an annulus to exhibit behaviour similar to those for the Bergman metric.

Does there exist a strongly pseudoconvex $C^\infty$-smoothly bounded domain $\Om$ such that $\big(\Om, ds_{s_\Om}^2\big)$ is not a Hadamard-Cartan manifold which possesses neither closed nor geodesic spirals? The existence of such kinds of domains holds for the Bergman metric; see \cite[Theorem 5.1]{hg83}.

On another note, it is natural to ask whether a geodesic $c(t)$ for the Szeg\"o metric, which does not remain within a compact subset of a $C^\infty$-smoothly bounded strongly pseudoconvex domain $\Om\subset\mf{C}^n$ for all $t\geq 0$, will eventually hit the boundary $\pa \Om$? 
\bibliographystyle{halpha}
\bibliography{Derived}

\end{document}